\title[$2$-representations of some $2$-categories of projective functors]{Simple 
transitive $2$-representations of some $2$-categories of projective functors}
\author{Jakob Zimmermann}
\date{\today}
\newtheorem{theorem}{Theorem}[section]
\newtheorem{lemma}[theorem]{Lemma}
\newtheorem{corollary}[theorem]{Corollary}
\theoremstyle{definition}
\newtheorem{example}[theorem]{Example}
\newtheorem{remark}[theorem]{Remark}
\begin{document}

\begin{abstract}
We show that every simple transitive $2$-representation of the $2$-category of projective functors 
for a certain quotient of the quadratic dual of the preprojective algebra associated with a 
tree is equivalent to a cell $2$-rep\-re\-sen\-ta\-tion.
\end{abstract}

\maketitle
\section{Introduction}

In \cite{MM1}, Mazorchuk and Miemietz started a systematic study of $2$-representations for 
certain $2$-categories which should be thought of as analogues of finite dimensional algebras. 
They introduced the notion of cell $2$-representations as a possible $2$-analogue 
of the notion of simple modules. This was revised in \cite{MM5,MM6} where the notion of a 
simple transitive $2$-representation was introduced. A weak version of the Jordan-H{\"o}lder theory
was developed in \cite{MM5} for simple transitive $2$-representations which was a convincing argument
that simple transitive $2$-representations are proper $2$-analogues of simple modules. In many
important cases, for example for the $2$-category of Soergel bimodules in type $A$, it turns out that 
every simple transitive $2$-representations is equivalent to a cell $2$-representation.

Another class of natural $2$-categories for which every simple transitive $2$-rep\-re\-sen\-ta\-tions is 
equivalent to a cell $2$-representation is the class of $2$-categories of projective bimodules
for a finite dimensional self-injective associative algebra, see \cite{MM5,MM6}. After \cite{MM5,MM6}
there were several attempts to extend this results to other associative algebras. Two particular
algebras were considered in \cite{MZ1} and one more in \cite{MMZ}. These two papers have rather
different approaches: the approach of \cite{MZ1} is based on existence of a non-zero projective-injective 
module while \cite{MMZ} treats the smallest algebra which does not have any projective-injective modules.
Recently, \cite{MZ2} extended the approach of \cite{MZ1} and completely covered the case of directed algebras 
which have a non-zero projective-injective module. We refer the reader to \cite{Ma} for a general overview 
of the problem and related results.

In this note we show that the method developed in \cite{MZ2} can also be extended to some interesting
algebras which are not directed (but which have a non-zero projective-injective module). The algebras
we consider are certain quotients of quadratic duals of preprojective algebras associated with
trees (cf. \cite{Ri}). These kinds of algebras appear naturally in Lie theory (see \cite{S,M}), in diagram algebras
(see \cite{HK}) and in the theory of Koszul algebras (see \cite{D}). Our main result is that,
for our algebras (which are defined in Subsection~\ref{s2.1}), every simple transitive $2$-representation of the
corresponding $2$-category of projective bimodules is equivalent to a cell $2$-representation.

The paper is organized as follows. In the next section we define the type of algebras which we want 
to study, describe some motivating examples and give all the necessary notions needed to formulate 
the main result. Section~\ref{mainRes} is then devoted to stating and proving the main result.

\section{Preliminaries}

Throughout the paper we work over an algebraically closed field $\Bbbk$.

\subsection{The algebra $A_{T,S}$}\label{s2.1}
Let $n$ be a positive integer. Let $T = (V,E)$ be a tree with vertices labeled
by numbers $1, 2, \ldots, n$, where $n>1$.
We denote by $L \subseteq V$ the set of all leaves of $T$. 
Denote by $Q = Q_T = (V, \hat{E})$ the quiver were we replace every (unoriented)
edge $\{i,j\} \in E$,  by two arrows (i.e. oriented edges) 
$(i, j)$ and $(j, i)$. Let $\Bbbk Q$ be the path algebra of $Q$.

Now we define a certain quotient $A_{T,S}$ of $\Bbbk Q$. For this, fix a
(possibly empty) subset $S$ of $L$, and consider the ideal $\mathcal{I}$ of $\Bbbk Q$ generated
by the following relations:
\begin{itemize}
 \item For all pairwise distinct $v_1,v_2,v_3 \in V$ such that there are arrows $a_1, a_2 \in \hat{E}$ with
 \[
  \begin{tikzpicture}
  \matrix(m)[matrix of math nodes, row sep=2.5em, column sep=2.5em,
    text height=1.0ex, text depth=0.25ex]
    {v_1 & v_2 & v_3, \\};
    \path[->,font=\scriptsize]
    (m-1-1) edge node[above] {$a_1$} (m-1-2);
    \path[->,font=\scriptsize]
    (m-1-2) edge node[above] {$a_2$} (m-1-3);
  \end{tikzpicture}
\]
 we set $a_2a_1 = 0$.
 \item For all pairwise distinct vertices $v_1,v_2,v_3 \in V$ 
 such that there exist arrows $a_1, a_2, b_1, b_2 \in \hat{E}$ with
 \[
  \begin{tikzpicture}
  \matrix(m)[matrix of math nodes, row sep=2.5em, column sep=2.5em,
    text height=1.0ex, text depth=0.25ex]
    {v_1 & v_2 & v_3, \\};
    \path[->,font=\scriptsize]
    (m-1-1) edge [bend left] node[above] {$a_1$} (m-1-2);
    \path[->,font=\scriptsize]
    (m-1-2) edge [bend left] node[below] {$b_1$} (m-1-1);
    \path[->,font=\scriptsize]
    (m-1-2) edge [bend left] node[above] {$a_2$} (m-1-3);
    \path[->,font=\scriptsize]
    (m-1-3) edge [bend left] node[below] {$b_2$} (m-1-2);
  \end{tikzpicture}
\]
 we set $a_1b_1 = b_2a_2$.
 \item For $v \in V$ and $s \in S$ such that there are arrows $a, b \in \hat{E}$ with
  \[
  \begin{tikzpicture}
  \matrix(m)[matrix of math nodes, row sep=2.5em, column sep=2.5em,
    text height=1.0ex, text depth=0.25ex]
    {v & s,  \\};
    \path[->,font=\scriptsize]
    (m-1-1) edge [bend left] node[above] {$a$} (m-1-2);
    \path[->,font=\scriptsize]
    (m-1-2) edge [bend left] node[below] {$b$} (m-1-1);
  \end{tikzpicture}
 \]
 we set $ab = 0$.
\end{itemize} 
The algebra $A_{T,S}$, which we will denote simply by  $A$, is now defined
as the quotient of $\Bbbk Q$ by the ideal $\mathcal{I}$.
We denote the idempotents of $A$ by
$e_i$, for each $i \in V$. For $i\in V$, we set $P_i:=Ae_i$ and denote by 
$L_i$ the simple top of $P_i$.

The structure of projective $A$-modules follows directly from the defining relations:
\begin{itemize}
\item If $i\in V\setminus S$, then $P_i$ is projective-injective of Loewy
length three with isomorphic top and socle. The module $\mathrm{Rad}(P_i)/\mathrm{Soc}(P_i)$
is multiplicity-free and contains all simple $L_j$ such that $\{i,j\}\in E$.
\item If $i\in S=V$, then $n=2$ and $P_i$ is projective-injective of Loewy
length two with non-isomorphic top and socle. 
\item If $i\in S\neq V$, then $P_i$ is not injective, it has Loewy
length two and its socle is isomorphic to $L_j$, where $j\in V$ is the unique vertex such that $\{i,j\}\in E$.
\end{itemize}
From the above description we see that the algebra $A$ is self-injective if and only if $S= \varnothing$
or $S=V$ (in the latter case we have $n=2$).

The motivation for the above definition stems from the following examples. 
\begin{example}
 Let $T$ be the following Dynkin diagram of type $A_n$:
 \[
  \begin{tikzpicture}
  \matrix(m)[matrix of math nodes, row sep=2.5em, column sep=2.5em,
    text height=1.0ex, text depth=0.25ex]
    {1 & 2 & \cdots & n - 1 & n. \\};
    \path[-,font=\scriptsize]
    (m-1-1) edge (m-1-2);
    \path[-,font=\scriptsize]
    (m-1-2) edge (m-1-3);
    \path[-,font=\scriptsize]
    (m-1-3) edge (m-1-4);
    \path[-,font=\scriptsize]
    (m-1-4) edge (m-1-5);
  \end{tikzpicture}
 \]
We have $L = \{1, n\}$. Set $S := \{n\}$. Then $Q_T$ is the following quiver:
 \[
  \begin{tikzpicture}
  \matrix(m)[matrix of math nodes, row sep=2.5em, column sep=2.5em,
    text height=1.0ex, text depth=0.25ex]
    {1 & 2 & \cdots & n - 1 & \boxed{n}. \\};
    \path[->,font=\scriptsize]
    (m-1-1) edge [bend left] node[above] {$a_1$} (m-1-2);
    \path[->,font=\scriptsize]
    (m-1-2) edge [bend left] node[below] {$b_1$} (m-1-1);
    \path[->,font=\scriptsize]
    (m-1-2) edge [bend left] node[above] {$a_2$} (m-1-3);
    \path[->,font=\scriptsize]
    (m-1-3) edge [bend left] node[below] {$b_2$} (m-1-2);
    \path[->,font=\scriptsize]
    (m-1-3) edge [bend left] node[above] {$a_{n-2}$} (m-1-4);
    \path[->,font=\scriptsize]
    (m-1-4) edge [bend left] node[below] {$b_{n-2}$} (m-1-3);
    \path[->,font=\scriptsize]
    (m-1-4) edge [bend left] node[above] {$a_{n-1}$} (m-1-5);
    \path[->,font=\scriptsize]
    (m-1-5) edge [bend left] node[below] {$b_{n-1}$} (m-1-4);
  \end{tikzpicture}
 \]
 The distinguished leaf $n$ is the one which is in $S$.
 The relations in $A = A_{T,S}$ are given by 
 \begin{IEEEeqnarray*}{r;c;l;r}
  a_{i+1}a_i & = & b_{i+1}b_i =  0, & \quad i = 1, \ldots, n-2; \\ 
  a_ib_i & = & b_{i+1}a_{i+1}, & i = 1, \ldots, n-2;\\
  a_{n-1}b_{n-1} & = & 0. &
 \end{IEEEeqnarray*}
 The module category over this algebra is equivalent to the principal block of 
 parabolic category $\mathcal{O}$ associated to the complex Lie algebra $\mathfrak{sl}_n$ 
 and a parabolic subalgebra of $\mathfrak{sl}_n$  for which the semi-simple part of the
 Levi quotient is isomorphic to $\mathfrak{sl}_{n-1}$, see e.g.  \cite{S}.
\end{example}
A second example is:
\begin{example}
 Let $T$ be the following tree on $4$ vertices 
  \[
  \begin{tikzpicture}
  \matrix(m)[matrix of math nodes, row sep=2.5em, column sep=2.5em,
    text height=1.0ex, text depth=0.25ex]
    { & & 3\\
    1 & 2 &\\
     &  &  4.\\};
    \path[-,font=\scriptsize]
    (m-2-1) edge (m-2-2);   
    \path[-,font=\scriptsize]
    (m-2-2) edge (m-1-3);
    \path[-,font=\scriptsize]
    (m-2-2) edge (m-3-3);
  \end{tikzpicture}
 \]
 We have $L = \{1, 3, 4\}$. Set $S := \{3, 4\}$. Then $Q_T$ looks as follows:
   \[
  \begin{tikzpicture}
  \matrix(m)[matrix of math nodes, row sep=2.5em, column sep=2.5em,
    text height=1.0ex, text depth=0.25ex]
    { & & \boxed{3}\\
    1 & 2 &\\
     &  &  \boxed{4}\\};
    \path[->,font=\scriptsize]
    (m-2-1) edge [bend left] node[above] {$a_1$} (m-2-2) 
    (m-2-2) edge [bend left] node[below] {$b_1$} (m-2-1);
    \path[->,font=\scriptsize]
    (m-2-2) edge [bend left] node[left] {$a_2$} (m-1-3)
    (m-1-3) edge [bend left] node[right] {$b_2$} (m-2-2);
    \path[->,font=\scriptsize]
    (m-2-2) edge [bend left] node[right] {$a_3$} (m-3-3)
    (m-3-3) edge [bend left] node[left] {$b_3$} (m-2-2);
  \end{tikzpicture}
 \]
 The relations in $A_{T,S}$ are given by 
 \begin{align*}
  a_2a_1 & = a_3a_1 = b_1b_2 = b_1b_3 = a_3b_2 = a_2b_3 = a_2b_2 =a_3b_3 = 0, \\
  a_1b_1 & = b_2a_2 = b_3a_3.
 \end{align*}
 These kinds of quivers appear as parts of infinite quivers in e.g. \cite{M}.
\end{example}

\subsection{The $2$-category $\mathscr{C}_A$}
From now on we fix a tree $T$ and a subset $S$ of its leaves. Let $A=A_{T,S}$.
For generalities on finitary $2$-categories, we refer the reader to \cite{MM1}.

Following \cite[Subsection 7.3]{MM1}, we define the finitary $2$-category $\mathscr{C}_A$ of 
projective endofunctors of $A$-mod. Fix a small category $\mathcal{C}$ equivalent to $A$-mod. 
The $2$-category $\mathscr{C}_A$ has one object $\texttt{i}$, which we identify with $\mathcal{C}$. 
Indecomposable  $1$-mor\-phisms are endofunctors of $\mathcal{C}$ given by tensoring with:
\begin{itemize}
\item the regular $A$-$A$-bimodule $_AA_A$ (this corresponds to the identity 
$1$-morphism $\mathbbm{1}_{\texttt{i}}$);
\item the indecomposable $A$-$A$-bimodule $Ae_i \otimes_{\Bbbk} e_jA$, for some $i,j\in\{1,2,\dots,n\}$,
we denote such a $1$-morphism by  $F_{ij}$. 
\end{itemize}
Lastly, $2$-morphisms are  homomorphisms of $A$-$A$-bimodules.

A \emph{finitary $2$-representation} of $\mathscr{C}_A$  is a (strict) $2$-functor   from 
$\mathscr{C}_A$ to the $2$-category of small finitary additive $\Bbbk$-linear categories. 
In other words, $\mathbf{M}$ is given by an additive and $\Bbbk$-linear functorial actions 
on a category $\mathbf{M}(\texttt{i})$ which is equivalent to the category $B$-proj of 
projective modules over some finite dimensional associative $\Bbbk$-algebra $B$. 
For $M \in \mathbf{M}(\texttt{i})$, we will often write $FM$ instead of $\mathbf{M}(F)(M)$.
All {finitary $2$-representation} of $\mathscr{C}_A$ form a $2$-category $\mathscr{C}_A$-afmod
where $1$-morphisms are strong $2$-natural transformations and $2$-morphisms are modifications,
see \cite[Section~2.3]{MM3} for details.

We call a finitary $2$-representation $\mathbf{M}$ of $\mathscr{C}_A$ \emph{transitive} 
if, for every non-zero object $X \in \mathbf{M}(\texttt{i})$,
the additive closure of $\{FX\}$, where $F$ runs through all $1$-morphisms in $\mathscr{C}_A$, 
equals $\mathbf{M}(\texttt{i})$.  We call a transitive $2$-representation $\mathbf{M}$ 
\emph{simple} if $\mathbf{M}(\texttt{i})$ has no proper  $\mathscr{C}_A$-invariant ideals. 
For more details on this, we refer the reader to \cite{MM5, MM6}.

One class of examples of simple transitive $2$-representations are so-called cell $2$-representations. 
For details about these we refer the reader to \cite{MM1, MM2}. The $2$-category $\mathscr{C}_A$ has
two two-sided cells: the first one consisting of $\mathbbm{1}_{\texttt{i}}$ and the second one containing
all $F_{ij}$. The first two-sided cell is a left cell. The second two-sided cell contains $n$
different left cells, namely, $\mathcal{L}_j:=\{F_{ij}:i=1,2,\dots,n\}$, where $j=1,2,\dots,n$.
Up to equivalence, we have two cell $2$-representations:
\begin{itemize}
\item The cell $2$-representations $\mathbf{C}_{\mathbbm{1}_{\texttt{i}}}$ which is given as the 
quotient of the left regular action of $\mathscr{C}_A$ on $\mathscr{C}_A(\texttt{i},\texttt{i})$ 
by the unique maximal $\mathscr{C}_A$-invariant ideal (cf. \cite[Section~6]{MM2}). 
\item Each cell $2$-representations $\mathbf{C}_{\mathcal{L}_j}$, where $j=1,2,\dots,n$, 
is equivalent to the defining $2$-rep\-re\-sen\-ta\-tion (the defining action on $\mathcal{C}$).
\end{itemize}

\subsection{Positive idempotent matrices}
One of the ingredients in our proofs is the following classification of non-negative idempotent matrices, see \cite{F69}.
\begin{theorem}
\label{Flor}
 Let $I$ be a non-negative idempotent matrix of rank $k$. Then there exists a permutation
 matrix $P$ such that
 \[P^{-1}IP = 
   \begin{pmatrix}
    0 & AJ & AJB\\
    0 & J & JB \\
    0 & 0 & 0 \\
   \end{pmatrix}
\quad\text{ with }\quad
 J =  \begin{pmatrix}
       J_1 &  0   & \cdots & 0 \\
       0   & J_2 &  &  \vdots\\
       \vdots & & \ddots & 0\\
       0 & \cdots & 0 & J_k
      \end{pmatrix}.
\]
Here, each $J_i$ is a non-negative idempotent matrix of rank one and $A, B$ are non-negative matrices
of the appropriate size.
\end{theorem}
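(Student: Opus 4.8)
The plan is to argue by induction on the size $n$ of $I$, separating a block-diagonal ``recurrent'' core from ``transient'' data. Associate to $I$ the digraph $\Gamma$ on $\{1,\dots,n\}$ with an arrow $i\to j$ whenever $I_{ij}>0$. Since $I^m=I$ for all $m\ge 1$, one has $(I^m)_{ij}>0$ if and only if $I_{ij}>0$; hence an arrow $i\to j$ exists precisely when there is a directed path from $i$ to $j$ of any prescribed length, and $i$ lies on a directed cycle precisely when $I_{ii}>0$. Non-negativity gives $I_{ii}=\sum_l I_{il}I_{li}\ge I_{ii}^2$, so $0\le I_{ii}\le 1$, while $\operatorname{tr}I=\operatorname{rank}I=k$. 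I will also use repeatedly that a non-zero non-negative rank-one idempotent has the form $uv^{\mathsf T}$ with $u,v\ge 0$ and $v^{\mathsf T}u=1$, and that if it is moreover irreducible then $u$ and $v$ are coordinatewise positive.

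First I would settle the case in which $I$ has no zero row and no zero column, the claim being that then $I$ is, after a simultaneous permutation of rows and columns, block-diagonal with each block a rank-one non-negative idempotent (so that $J=I$ up to permutation and $A,B$ are void). Put $I$ into block upper-triangular (Frobenius) normal form, with irreducible diagonal blocks $B_1,\dots,B_r$ indexed by the strongly connected components of $\Gamma$. Squaring a block upper-triangular matrix shows each $B_a$ is itself idempotent; being non-negative, irreducible and idempotent, $B_a$ is either the $1\times 1$ zero block or — by Perron--Frobenius (spectral radius $1$, a simple eigenvalue) together with diagonalisability of idempotents — a rank-one idempotent. Using the hypothesis of no zero row and no zero column one rules out the $1\times 1$ zero blocks (the extreme diagonal blocks cannot be such, and if $B_a$ were then column/row data produces $i\to p\to j$ with $i$ in an earlier, hence rank-one, block, leading to a contradiction) and likewise rules out arrows between distinct diagonal blocks. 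All these contradictions come from expanding $(I^2)_{ij}=\sum_l I_{il}I_{lj}=I_{ij}$: for $i\in B_a$, $j\in B_b$ with $I_{ij}>0$ one has $I_{i'j}>0$ for every $i'\in B_a$ and $I_{i'l}=u_{i'}v_l$ for $i',l\in B_a$, so summing $I_{i'j}\ge u_{i'}\sum_{l\in B_a}v_l I_{lj}+(\text{strictly positive extra terms})$ over $i'\in B_a$ against the weights $v_{i'}$ and using $\sum_{i'}v_{i'}u_{i'}=1$ yields $\sum_{i'}v_{i'}I_{i'j}>\sum_{i'}v_{i'}I_{i'j}$. Hence in this case $I$ is block-diagonal with rank-one idempotent blocks.

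If instead $I$ has a zero row or a zero column, permute so that $I$ takes the shape $\left(\begin{smallmatrix} 0 & P_0 & Q_0\\ 0 & J_0 & R_0\\ 0 & 0 & 0\end{smallmatrix}\right)$ with the first block of columns and the last block of rows zero. Comparing blocks in $I^2=I$ gives $J_0^2=J_0$, $P_0J_0=P_0$, $J_0R_0=R_0$ and $Q_0=P_0R_0$, so $J_0$ is a non-negative idempotent of rank $k$ and of strictly smaller size; apply the induction hypothesis to it. Conjugating by the permutation that normalises $J_0$, the middle block becomes $\left(\begin{smallmatrix}0 & A'J & A'JB'\\ 0 & J & JB'\\0&0&0\end{smallmatrix}\right)$, and regrouping the resulting five diagonal blocks as $\alpha=(\text{old zero columns})\sqcup(\text{new }\alpha)$, $\beta=(\text{the core }J)$, $\gamma=(\text{new }\gamma)\sqcup(\text{old zero rows})$, the relations $P_0J_0=P_0$, $J_0R_0=R_0$, $Q_0=P_0R_0$ and $J^2=J$ force the off-diagonal data into precisely the form $AJ$, $JB$, $AJB$, the matrices $A,B$ being non-negative as they are assembled from submatrices of a permuted $I$; concretely $P_0J_0=P_0$ kills the component of $P_0$ in the new-$\alpha$ columns and turns the remainder into $AJ$, dually $R_0$ becomes $JB$, and $Q_0=P_0R_0$ becomes $AJ\cdot JB=AJB$. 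This closes the induction.

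The real work is entirely in the no-zero-row, no-zero-column case, and specifically in the two non-obstruction claims there; I expect that to be the main obstacle. The delicate point is that the diagonal entries inside an irreducible rank-one block $uv^{\mathsf T}$ with $v^{\mathsf T}u=1$ need not be large, so the estimate cannot be run index-by-index — the device is to sum the relevant inequality over a whole block and pair it with the corresponding left eigenvector, which is exactly what makes $\sum_{i'}v_{i'}u_{i'}=1$ bite. The singleton blocks, where the unique diagonal entry equals $1$, are cleanest to dispatch first, since there the contradiction $(I^2)_{ij}\ge I_{ij}(1+I_{jj})>I_{ij}$ is immediate.
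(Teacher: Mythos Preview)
The paper does not prove Theorem~\ref{Flor}; it merely quotes it as a known ingredient from Flor's paper \cite{F69}, so there is no ``paper's own proof'' to compare your attempt against.

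That said, your outline is essentially the classical argument. The inductive reduction via stripping off zero rows/columns is clean, and the block relations $P_0J_0=P_0$, $J_0R_0=R_0$, $Q_0=P_0R_0$ do force the claimed shape. The real content is, as you say, the no-zero-row/no-zero-column case, and there your writeup has two soft spots. First, your contradiction for an off-diagonal arrow $B_a\to B_b$ needs a ``strictly positive extra term'', which you obtain from $I_{jj}>0$; but this uses that $B_b$ is a rank-one block, which you have not yet established when $B_b$ might still be a $1\times 1$ zero block. Second, your elimination of interior $1\times 1$ zero blocks (``column/row data produces $i\to p\to j$ with $i$ in an earlier, hence rank-one, block'') presupposes that the earlier block is already known to be rank-one, which is circular as stated. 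Both issues are fixable with a short extra step: observe that the extremal (source and sink) diagonal blocks in the Frobenius form must be rank-one under the no-zero-row/column hypothesis; then any arrow out of a rank-one block can be pushed forward along the DAG of components until it hits a rank-one sink, giving your summation contradiction between two genuine rank-one blocks; finally, any interior zero block has an incoming arrow tracing back to a rank-one source and an outgoing arrow tracing forward to a rank-one sink, and composing produces a forbidden arrow between rank-one blocks. With that ordering of the argument your sketch becomes a complete proof.
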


\begin{remark}
This theorem can be applied to quasi-idempotent (but not nilpotent) matrices as well. 
If $I^2 = \lambda I$ and $\lambda\neq 0$, then $(\frac{1}{\lambda}I)^2 = \frac{1}{\lambda^2}I^2 = 
\frac{1}{\lambda}I$. Hence $\frac{1}{\lambda}I$ is an idempotent and thus can be described by the above theorem. 
\end{remark}

\section{Main result}
\label{mainRes}
Fix a tree $T$ and a subset $S$ of its leaves and set $A = A_{T,S}$. Then our main result can be stated as follows:

\begin{theorem}\label{mainresult}
 Let $\mathbf{M}$ be a simple transitive $2$-representation of $\mathscr{C}_A$, then $\mathbf{M}$ is equivalent
 to a cell $2$-representation.
\end{theorem}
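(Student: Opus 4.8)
The plan is to run the standard programme of \cite{MM5,MM6,MZ1,MZ2}: pass to the non-negative integer matrices $[F_{ij}]$ recording the action of the $1$-morphisms on the split Grothendieck group of $\mathbf{M}(\texttt{i})$, and then pin these matrices down using Theorem~\ref{Flor} together with the projective-injective $A$-module which always exists here. Write $c_{jk}:=\dim_{\Bbbk}e_jAe_k$; these integers are read off from $T$ and $S$ via the description of the indecomposable projectives, and the bimodule isomorphism $F_{ij}\circ F_{kl}\cong F_{il}^{\oplus c_{jk}}$ translates into $[F_{ij}][F_{kl}]=c_{jk}[F_{il}]$. First I would dispose of the easy alternative: if every $F_{ij}$ acts as the zero functor in $\mathbf{M}$, then transitivity applied through $\mathbbm{1}_{\texttt{i}}$ shows that $\mathbf{M}(\texttt{i})$ has exactly one indecomposable object; since all ideals of $\mathbf{M}(\texttt{i})$ are then $\mathscr{C}_A$-invariant, simplicity forces its endomorphism ring to be $\Bbbk$, and hence $\mathbf{M}\simeq\mathbf{C}_{\mathbbm{1}_{\texttt{i}}}$. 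So from now on assume that some $F_{ij}$ acts non-trivially; by \cite{MM5} it follows that all $F_{ij}$ act non-trivially, equivalently $\mathcal{J}=\{F_{ij}\}$ is the apex of $\mathbf{M}$, and the goal becomes to show that $\mathbf{M}$ is equivalent to the defining $2$-representation of $\mathscr{C}_A$ on $A$-proj, which is $\mathbf{C}_{\mathcal{L}_j}$ for every $j$.

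In the main case we may assume $\varnothing\neq S\subsetneq L$, since for $S\in\{\varnothing,V\}$ the algebra $A$ is self-injective and $\mathscr{C}_A$ is then a $2$-category of projective bimodules of the type handled in \cite{MM5,MM6}. Fix $i_0\in V\setminus S$, so that $P_{i_0}$ is projective-injective, and put $X:=[F_{i_0i_0}]$. Since $F_{i_0i_0}^{\circ m}\cong F_{i_0i_0}^{\oplus c_{i_0i_0}^{m-1}}$ the matrix $X$ is not nilpotent, and $X^2=2X$ because $\dim_{\Bbbk}e_{i_0}Ae_{i_0}=2$; hence Theorem~\ref{Flor} applies to $\tfrac12 X$ and, after permuting the indecomposables of $\mathbf{M}(\texttt{i})$, puts $X$ into the block form of that theorem, of some rank $r\ge 1$. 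The heart of the proof is to show that $r=1$. This is precisely the point at which $A$ being non-directed has to be circumvented: the symmetric relations $a_1b_1=b_2a_2$ prevent one from ordering the vertices so that all $[F_{ij}]$ become triangular, which is the mechanism of \cite{MZ2}. Instead one must exploit the homological rigidity of $F_{i_0i_0}$ coming from $P_{i_0}$ being projective-injective — in the abelianisation of $\mathbf{M}$, the functor $F_{i_0i_0}$ restricted to the additive closure of the projective-injective objects behaves like a rank-one projection — together with non-negativity and with simple transitivity, which forbids any proper subset of the indecomposables of $\mathbf{M}(\texttt{i})$ from spanning a subspace stable under all $[F_{ij}]$. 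A Flor block form with $r\ge 2$ would, through these constraints, produce exactly such a proper $\mathscr{C}_A$-invariant ideal, a contradiction. I expect this reduction to $r=1$, and in particular making the projective-injective module carry the weight that directedness carried in \cite{MZ2} (quite possibly via an induction on $T$ that successively removes the vertices of $S$), to be the main obstacle.

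Once $r=1$ is known, the remainder is formal. From $F_{ii_0}\circ F_{i_0i_0}\circ F_{i_0j}\cong F_{ij}^{\oplus c_{i_0i_0}^{2}}$ we get $c_{i_0i_0}^{2}[F_{ij}]=[F_{ii_0}]\,X\,[F_{i_0j}]$, so $\mathrm{rank}[F_{ij}]\le\mathrm{rank}\,X=1$; since $F_{ij}$ acts non-trivially, $\mathrm{rank}[F_{ij}]=1$, so $[F_{ij}]=\mathbf{v}_i\mathbf{w}_j^{\mathrm{T}}$ for non-negative integer vectors $\mathbf{v}_i,\mathbf{w}_j$. Feeding this into $[F_{ij}][F_{kl}]=c_{jk}[F_{il}]$ gives $\mathbf{w}_j^{\mathrm{T}}\mathbf{v}_k=c_{jk}$, i.e. the pairing between the $\mathbf{v}$'s and the $\mathbf{w}$'s is the Cartan matrix of $A$; combined with transitivity (the $\mathbf{v}_i$ generate the Grothendieck group) and with non-negativity, this forces $\mathbf{M}(\texttt{i})$ to have exactly $n$ indecomposable objects, which may be labelled $Q_1,\dots,Q_n$ so that $F_{ij}$ sends $Q_k$ to $Q_i^{\oplus c_{jk}}$ — exactly the defining action. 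Finally the comparison machinery of \cite{MM5,MM6} upgrades this numerical coincidence to an equivalence: there is a morphism of $2$-representations from $\mathbf{C}_{\mathcal{L}_j}$ onto $\mathbf{M}$, and since both sides are transitive with the same number of indecomposables, it is an equivalence, so $\mathbf{M}\simeq\mathbf{C}_{\mathcal{L}_j}$, as claimed.
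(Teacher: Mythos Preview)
Your overall plan coincides with the paper's: dispose of the non-faithful case, treat the self-injective case via \cite{MM5,MM6}, apply Theorem~\ref{Flor} to the quasi-idempotent matrices $[F_{ii}]$, pin them down to rank one, and then compare with the cell $2$-representation. The endgame you sketch (rank-one factorisations $[F_{ij}]=\mathbf{v}_i\mathbf{w}_j^{\mathrm T}$ with pairing the Cartan matrix, then the comparison morphism) is essentially what the paper does in Corollary~\ref{equalCartan} and Subsection~3.4, phrased slightly differently.

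The genuine gap is the step you yourself flag: forcing the rank to be $1$. Your suggestions for it (a $\mathscr{C}_A$-invariant ideal produced by a higher-rank Flor block, or an induction over $T$ removing vertices in $S$) are not what the paper does, and it is not clear either would go through. The paper uses two concrete ingredients you do not mention. First, \cite[Lemmas~20--22]{MZ2} give $X_i=Y_i$ and $\bigcup_iX_i=\{1,\dots,r\}$; these lemmas do \emph{not} rely on directedness of the quiver and apply here verbatim, so your remark that ``the mechanism of \cite{MZ2}'' is triangularity misidentifies which part of \cite{MZ2} is being invoked. The equality $X_i=Y_i$ is exactly what kills the off-diagonal blocks $A,B$ in the Flor form of $\tfrac{1}{k_i}[F_{ii}]$, leaving a block-diagonal matrix. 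Second, to show the remaining block has rank one when $i\notin S$, the paper restricts to the $2$-full subcategory $\mathscr{D}$ with indecomposable $1$-morphisms $\mathbbm{1}_{\texttt{i}}$ and $F_{ii}$; since $P_i$ is projective-injective, $F_{ii}$ is self-adjoint, $\mathscr{D}$ is weakly fiat with strongly regular cells, and \cite[Theorem~18]{MM5} forces every diagonal entry of $[F_{ii}]$ to lie in $\{0,2\}$. As the trace is $2$, there is a unique non-zero diagonal entry, so $|X_i|=1$. For $i\in S$ the trace is $1$ and the conclusion is immediate. The paper then proves the $X_i$ are pairwise disjoint by a short direct contradiction (Lemma~\ref{disjoint}), giving $r=n$ and $\mathbf{M}(F_{ij})\cong G_{ij}$; from here your sketch and the paper's argument converge.
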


Note that, if $S = \varnothing$ or $S=V$, then the algebra $A$ is self-injective and hence 
$\mathscr{C}_A$ is a weakly fiat $2$-category. In this case the statement follows from
\cite[Theorem~15]{MM5} and \cite[Theorem~33]{MM6}. Therefore, in what follows, we assume that 
$S \neq \varnothing,V$.

\subsection{Some notation}
For a simple transitive $2$-representation $\mathbf{M}$ of $\mathscr{C}_A$, 
we denote by $B$ a basic $\Bbbk$-algebra such that $\mathbf{M}(\texttt{i})$ 
is equivalent to $B$-proj. Moreover, let $1 = \epsilon_1 + \epsilon_2 + \cdots + \epsilon_r$ 
be a decomposition of the identity in $B$ into a sum of pairwise orthogonal 
primitive idempotents. Similarly to the situation in $A$, we denote, 
for $1 \leq i,j \leq r$, by $G_{ij}$ the endofunctor of $B$-mod 
given by tensoring with the indecomposable projective $B$-$B$-bimodule $B\epsilon_i \otimes \epsilon_jB$. 
Note that, a priori, there is no reason why we should have $r = n$.

For $i=1,2,\dots,r$, we denote by $Q_i$ the projective $B$-module $B\epsilon_i$.

We may, without loss of generality, assume that $\mathbf{M}$ 
is faithful since $\mathscr{C}_A$ is simple which was shown in \cite[Subsection 3.2]{MMZ}.
Indeed, if we assume that $\mathbf{M}$ is not faithful, then $M(F_{ij}) = 0$,
for all $i,j$. However, then the quotient of $\mathscr{C}_A$ by the ideal
generated by all $F_{ij}$ satisfies all the assumptions of 
\cite[Theorem 18]{MM5} and therefore $\mathbf{M}$ is equivalent to the cell $2$-representation
$\mathbf{C}_{\mathbbm{1}_{\texttt{i}}}$ in this case. 

So let us from now on assume that $\mathbf{M}$ is faithful and, in particular, that all $\mathbf{M}(F_{ij})$ 
are non-zero. As we have seen above,  $A$ has a non-zero projective-injective module and thus, 
combining \cite[Section~3]{MZ1} and \cite[Theorem~2]{KMMZ}, we deduce that 
each $\mathbf{M}(F_{ij})$ is a projective endofunctor of 
$B$-mod and, as such, is isomorphic to a non-empty direct sum of $G_{st}$, for some $1 \leq s,t \leq r$, 
possibly with multiplicities.

\subsection{The sets $X_i$ and $Y_i$}
Following \cite{MZ2}, for $1\leq i,j\leq n$, we define 
\begin{itemize}
 \item $ X_{ij} := \{s\;|\;$ $G_{st}$ is isomorphic to a direct summand of $\mathbf{M}(F_{ij})$, 
 for some $1 \leq t \leq r\}$,
 \item $ Y_{ij} := \{t\;|\;$ $G_{st}$ is isomorphic to a direct summand of $\mathbf{M}(F_{ij})$, 
 for some $1 \leq s \leq r\}$.
\end{itemize}

First of all, note that $X_{ij}$ and $Y_{ij}$ are non-empty as each $\mathbf{M}(F_{ij})$ is non-zero 
due to faithfulness of $\mathbf{M}$.

In \cite[Lemma~20]{MZ2}, it is shown that $X_{ij_1} = X_{ij_2}$, 
for all $j_1,j_2 \in \{1,\dots, n\}$, and thus we may denote by 
$X_i$ the common value of all $X_{ij}$. Similarly, 
the sets $Y_{ij}$ only depend on $j$, hence we may denote by $Y_j$ the common value of the 
$Y_{ij}$, for all $i$. In \cite[Lemma~22, Lemma~21]{MZ2}, it is shown that $X_q = Y_q$, for all $q$ 
and moreover that $X_1\cup X_2\cup\dots\cup X_n=\{1,2,\dots,r\}$.
\subsection{Analysis of the sets $X_i$}
\label{Setup}

For a $1$-morphism $H$ in $\mathscr{C}_A$, we will denote by  $[H]$ the $r\times r$ matrix with coefficients
$h_{st}$, where $s,t\in\{1,2,\dots,r\}$, such that $h_{st}$ gives the multiplicity
of $Q_s$ in $HQ_t$.

\begin{lemma}
\label{1element}
 For each $i \in \{1,\ldots, n\}$, we have $|X_i| = 1$,
\end{lemma}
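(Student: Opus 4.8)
The goal is to show that each $X_i$ is a singleton. The natural strategy is to exploit the multiplicative structure of the decategorified action together with the classification of non-negative idempotent matrices from Theorem~\ref{Flor}. First I would note that since $F_{ii}\cong F_{ii}\circ F_{ii}$ up to isomorphism in $\mathscr{C}_A$ (because $Ae_i\otimes e_iA\otimes_A Ae_i\otimes e_iA \cong (e_iAe_i)\otimes(Ae_i\otimes e_iA)$, and $e_iAe_i$ is local so the bimodule $Ae_i\otimes e_iA$ is a direct summand of $F_{ii}\circ F_{ii}$ with some multiplicity), the matrix $[\mathbf{M}(F_{ii})]$ is a non-negative quasi-idempotent matrix. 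More precisely, one computes $\dim_{\Bbbk} e_iAe_i$ as the multiplicity, so $[\mathbf{M}(F_{ii})]^2 = (\dim e_iAe_i)\,[\mathbf{M}(F_{ii})]$, and the remark following Theorem~\ref{Flor} applies.

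Next I would analyze what Theorem~\ref{Flor} forces. The matrix $[\mathbf{M}(F_{ii})]$ has rows indexed by $X_i$ (the nonzero rows) and columns indexed by $Y_i = X_i$, so after deleting zero rows and columns it is a square non-negative quasi-idempotent matrix supported exactly on $X_i\times X_i$ with no zero row and no zero column. By Theorem~\ref{Flor}, up to simultaneous permutation such a matrix (after rescaling) has the block form displayed there; the condition that there are no zero rows forces the first block-column to be absent, and no zero columns forces the last block-row to be absent, so $[\mathbf{M}(F_{ii})]$ is, up to permutation and scalar, a block-diagonal matrix with rank-one non-negative idempotent blocks $J_1,\dots,J_k$. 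The key extra input is transitivity/indecomposability: $F_{ii}$ together with the $F_{ij}$ connect all the $Q_s$ for $s\in X_i$, which should force the block-diagonal matrix to consist of a single block, i.e. $[\mathbf{M}(F_{ii})]$ has rank one. Then, since a rank-one non-negative idempotent $J$ has the form $J = uv^{t}$ with $v^{t}u = 1$ and all entries of $u,v$ positive, and since the entries are non-negative integers (being multiplicities), one concludes $J$ is a single $1$ in one entry, hence $|X_i| = 1$.

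The step I expect to be the main obstacle is justifying that the block-diagonal decomposition has only one block — equivalently, ruling out the possibility that $X_i$ splits into several "independent" pieces under the action of $F_{ii}$. This needs a genuine use of the hypotheses beyond the matrix combinatorics: one has to bring in the interaction between $F_{ii}$ and the other $1$-morphisms $F_{ij}$ (via the isomorphisms $F_{ij}\circ F_{jk}\cong (\dim e_jAe_j\text{-ish factor})\,F_{ik}$ coming from the bimodule tensor products over $A$), and the fact established in \cite[Lemma~22, Lemma~21]{MZ2} that $X_q=Y_q$ and that the $X_q$ cover $\{1,\dots,r\}$, to show that a nontrivial block splitting would contradict transitivity of $\mathbf{M}$ (it would produce a proper $\mathscr{C}_A$-stable subcategory, or a proper invariant ideal). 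A clean way to package this: the additive closure of $\{Q_s : s\in X_i\}$ under the first block would be $\mathscr{C}_A$-invariant, contradicting simplicity/transitivity unless it is everything, and a parallel argument applied within $X_i$ forces a single block. Once the single-block reduction is in hand, the integrality of multiplicities finishes the argument immediately, giving $|X_i|=1$.
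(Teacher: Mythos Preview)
Your overall strategy---quasi-idempotent relation, Flor's theorem, then the $X_i=Y_i$ symmetry to kill the off-diagonal blocks---matches the paper's skeleton. The divergence, and the genuine gap, is exactly where you flag it: the passage from ``$\frac{1}{k_i}[F_{ii}]$ is block-diagonal with rank-one idempotent blocks $J_1,\dots,J_k$'' to ``$k=1$ and the block has size $1$''.

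Your proposed mechanism for forcing a single block is that a block would generate a proper $\mathscr{C}_A$-invariant subcategory, contradicting transitivity. This does not work as stated: the block splitting is only a splitting for the action of $F_{ii}$, and the other $1$-morphisms $F_{jl}$ (with $j\neq i$) have no reason to respect it. Indeed the rows of $[F_{jl}]$ are supported on $X_j$, not $X_i$, so applying $F_{jl}$ to a $Q_s$ with $s$ in one $J$-block of $X_i$ can land in $X_j$, and composing back via some $F_{im}$ can return to a different $J$-block of $X_i$. So the ``parallel argument applied within $X_i$'' is not available without substantially more input, and your sketch does not supply it.

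The paper closes this gap by a different device. It restricts to the $2$-full $2$-subcategory $\mathscr{D}$ generated by $\mathbbm{1}_{\texttt{i}}$ and $F_{ii}$. For $i\notin S$ the module $P_i$ is projective-injective, so $F_{ii}$ is self-adjoint and $\mathscr{D}$ is weakly fiat with strongly regular cells; then \cite[Theorem~18]{MM5} classifies its simple transitive $2$-representations, and in each of them $F_{ii}$ acts by the $1\times 1$ matrix $(0)$ or $(2)$. Feeding this through the weak Jordan--H\"older theory forces every diagonal entry of $[F_{ii}]$ to be $0$ or $2$, which together with the block form and the integrality/positivity constraints pins down $|X_i|=1$. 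For $i\in S$ one has $k_i=1$, so $[F_{ii}]$ itself is a non-negative \emph{integer} idempotent; the trace argument then applies directly without the $\mathscr{D}$-restriction. Note also that your Step~5 integrality argument, as written, silently assumes $k_i=1$: when $i\notin S$ you only know the entries of $2J_l$ are integers, not those of $J_l$, so the ``$J$ is a single $1$'' conclusion needs the extra information about diagonal entries that the $\mathscr{D}$-argument provides.
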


\begin{proof}
First we note that, for all $1\leq i \leq n$, we have 
$F_{ii}\circ  F_{ii}\cong F_{ii}^{\oplus \dim(e_iAe_i)}$
and hence  $[F_{ii}]^2 = k_i[F_{ii}]$, where 
\begin{displaymath}
k_i=
\begin{cases}
2, & i\not\in S;\\
1, & i\in S.
\end{cases}
\end{displaymath}
Hence we can apply Theorem \ref{Flor} to $\frac{1}{k_i}[F_{ii}]$.
This yields that there exists an ordering of the basis vectors such that
\begin{equation}
\label{quasi-idempotent}
 \frac{1}{k_i}[F_{ii}] =
   \begin{pmatrix}
    0 & AJ & AJB \\
    0 & J  & AB  \\
    0 & 0  & 0 
   \end{pmatrix}.
\end{equation}
However, we have seen that $X_i = Y_i$, for any ordering of the basis. This implies that the 
if the $l$-th row of $\frac{1}{k_i}[F_{ii}]$ is zero, then so is the $l$-th column.  
Thus we get that $A = B = 0$ and, in particular, that 
\[
 \frac{1}{k_i}[F_{ii}] = 
 \begin{pmatrix}
    0 & 0\\
    0 & J
  \end{pmatrix}.
\]
If $i\in S$, we are done as $k_i=1$ and thus the trace of the corresponding matrix is $1$
which yields that $[F_{ii}]$ has to contain exactly one non-zero diagonal element and thus
$|X_i| = 1$.

Let now $i\not\in S$. We may restrict the action of $\mathscr{C}_A$ 
to the $2$-full finitary $2$-subcategory $\mathscr{D}$ of $\mathscr{C}_A$ whose 
indecomposable $1$-morphisms  are the ones which are isomorphic
to either $\mathbbm{1}_\texttt{i}$ or $F_{ii}$. This $2$-category, 
clearly, has only strongly regular two-sided cells. As $i\not\in S$, the projective module
$P_i$ is also injective and hence $F_{ii}$ is a self-adjoint functor (see \cite[Subsection~7.3]{MM1}).  
Therefore
$\mathscr{D}$ satisfies all assumptions of \cite[Theorem 18]{MM5} and hence every
simple transitive $2$-representation of $\mathscr{D}$ is equivalent to a cell 
$2$-representation. 

The $2$-category $\mathscr{D}$ has two left cells (both are also two-sided cells) and each
left cell contains a unique indecomposable $1$-morphism. The matrix of $F_{ii}$ in these
$2$-representations is either $(0)$ or $(2)$. This implies that, for $i\not\in S$, all diagonal elements in 
$[F_{ii}]$ are either equal to $0$ or to $2$. As $[F_{ii}]$ has trace $2$, it follows again that 
$[F_{ii}]$ contains a unique non-zero diagonal element and thus
$|X_i| = 1$.
\end{proof}

Next we are going to prove that the $X_i$'s are mutually disjoint. 

\begin{lemma}
\label{disjoint}
For $i,j \in \{1,\ldots, n\}$ such that $i \neq j$, we have $X_i \cap X_j = \varnothing$.
\end{lemma}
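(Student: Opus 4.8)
\textbf{Proof proposal for Lemma~\ref{disjoint}.}

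The plan is to argue by contradiction: suppose some index $m$ lies in $X_i\cap X_j$ for $i\neq j$. By Lemma~\ref{1element} each $X_i$ is a singleton, so this forces $X_i=X_j=\{m\}$. I would first record the consequence that, since $X_q=Y_q$ for all $q$ (established in \cite[Lemma~22]{MZ2}), we also have $Y_i=Y_j=\{m\}$. Thus every $\mathbf{M}(F_{kl})$ with $k\in\{i,j\}$ has all its indecomposable summands among the $G_{st}$ with $s\in X_k$ and $t\in Y_l$; in particular $\mathbf{M}(F_{ii})$, $\mathbf{M}(F_{ij})$, $\mathbf{M}(F_{ji})$ and $\mathbf{M}(F_{jj})$ are all isomorphic to direct sums of copies of the single bimodule functor $G_{mm}$. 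Consequently the matrices $[F_{ii}],[F_{ij}],[F_{ji}],[F_{jj}]$ are all scalar multiples of the rank-one matrix $E_{mm}$ (the matrix unit), and in fact, by the computation in the proof of Lemma~\ref{1element}, $[F_{ii}]=k_iE_{mm}$ and similarly $[F_{jj}]=k_jE_{mm}$.

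The contradiction should come from comparing this with the actual multiplication in $\mathscr{C}_A$. Since $i\neq j$, the vertices $i$ and $j$ of the tree $T$ are either adjacent or not. If they are \emph{not} adjacent, then $e_iAe_j=0$, so $F_{ij}\circ F_{ji}\cong F_{ii}^{\oplus\dim(e_jAe_i)}$ forces either $F_{ij}$ or $F_{ji}$ to act as zero on $\mathbf{M}$; but $\mathbf{M}$ is faithful, contradiction. If $i$ and $j$ \emph{are} adjacent, then $\dim(e_iAe_j)=\dim(e_jAe_i)=1$, so $F_{ij}\circ F_{ji}\cong F_{ii}$ and $F_{ji}\circ F_{ij}\cong F_{jj}$, whence $[F_{ij}][F_{ji}]=[F_{ii}]=k_iE_{mm}$ and $[F_{ji}][F_{ij}]=[F_{jj}]=k_jE_{mm}$. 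Writing $[F_{ij}]=\alpha E_{mm}$ and $[F_{ji}]=\beta E_{mm}$ with $\alpha,\beta$ positive integers (nonzero by faithfulness), we get $\alpha\beta=k_i$ and $\beta\alpha=k_j$, forcing $k_i=k_j$. But also $F_{ii}\circ F_{ij}\cong F_{ij}^{\oplus\dim(e_iAe_i)}$, i.e. $[F_{ii}][F_{ij}]=\dim(e_iAe_i)[F_{ij}]$, which gives $k_i\alpha=\dim(e_iAe_i)\,\alpha$, and since $\alpha\neq0$ this says $k_i=\dim(e_iAe_i)$; similarly $k_j=\dim(e_jAe_j)$. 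Now I would invoke the explicit structure of projective $A$-modules from Subsection~\ref{s2.1}: $\dim(e_iAe_i)=2$ exactly when $i\notin S$ and $=1$ when $i\in S$, which matches $k_i$ — so that particular relation is consistent and the contradiction must instead be extracted more carefully, by looking at a third vertex.

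The real obstruction, and the step I expect to be hardest, is ruling out the adjacent case. Here I would bring in a vertex $l$ adjacent to $i$ (using that $T$ is a tree with $n>1$, and choosing $l\neq j$ if possible, or handling the small-tree cases separately): the relations of $A$ give $F_{li}\circ F_{ij}\cong F_{lj}^{\oplus\dim(e_iAe_j)}=F_{lj}$ and $F_{il}\circ F_{li}\cong F_{ii}$, so $X_l=X_i=\{m\}$ as well, and by induction along the tree \emph{all} of $X_1,\dots,X_n$ collapse to $\{m\}$. Then $X_1\cup\dots\cup X_n=\{1,\dots,r\}$ (from \cite[Lemma~21]{MZ2}) forces $r=1$, so $B=\Bbbk$ and $\mathbf{M}(\texttt{i})\simeq\Bbbk\text{-proj}$ has a unique indecomposable object. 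In that situation the rank-one matrix $[F_{ii}]=(k_i)$ with $k_i\in\{1,2\}$, together with the defining relation $F_{ii}\circ F_{ij}\cong F_{ij}^{\oplus\dim e_iAe_i}$ and the need for $\mathbf{M}$ to be a simple transitive $2$-representation, must be shown to be incompatible with $\mathscr{C}_A$ having the cell structure described above (two two-sided cells, $n$ left cells in the big cell) — concretely, a one-dimensional faithful $2$-representation would have to realise the whole $1{\times}1$ "decategorified" algebra faithfully, and one checks this is impossible unless $n=1$, contradicting $n>1$. Assembling these pieces — the non-adjacent reduction via faithfulness, the adjacent reduction via the tree-walk to $r=1$, and the final numerical contradiction — completes the proof.
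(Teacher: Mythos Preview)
Your reduction to the two cases (adjacent vs.\ non-adjacent) and your treatment of the non-adjacent case are essentially what the paper does: from $X_i=X_j=Y_i=Y_j=\{m\}$ one knows $\mathbf{M}(F_{ij})$ is a nonzero sum of copies of $G_{mm}$, hence $\mathbf{M}(F_{ij})\circ\mathbf{M}(F_{ij})\neq 0$, forcing $\dim e_jAe_i\neq 0$ and thus $\{i,j\}\in E$.

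The gap is in the adjacent case. Your tree-walk is both unnecessary and, as written, does not work: the relation $F_{il}\circ F_{li}\cong F_{ii}$ constrains $X_i$, not $X_l$, and $F_{li}\circ F_{ij}\cong F_{lj}$ only tells you that $\mathbf{M}(F_{lj})$ has summands $G_{st}$ with $s\in X_l$ and $t\in Y_j=\{m\}$, which you already knew. So the propagation ``$X_l=\{m\}$'' is not justified, and the endgame at $r=1$ is left as ``one checks this is impossible''. The contradiction you are looking for is much closer at hand and does not require a third vertex at all: you never used the relation $F_{ij}\circ F_{ij}\cong F_{ij}^{\oplus\dim e_jAe_i}=F_{ij}$ (and likewise for $F_{ji}$). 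This is the key relation.

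Concretely, write $\mathbf{M}(F_{ii})=G_{mm}^{\oplus a}$, $\mathbf{M}(F_{ji})=G_{mm}^{\oplus b}$ and $d:=\dim\epsilon_mB\epsilon_m$, so that $G_{mm}\circ G_{mm}\cong G_{mm}^{\oplus d}$. Since $\{i,j\}\in E$ and $S\neq V$, the two vertices cannot both lie in $S$ (two adjacent leaves would force $n=2$ and $S=V$); say $i\notin S$. Then $F_{ii}^2\cong F_{ii}^{\oplus 2}$ gives $a d=2$, while $F_{ji}^2\cong F_{ji}$ gives $bd=1$, hence $d=1$ and $a=2$, $b=1$. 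Now the paper's one-line contradiction is simply
\[
\mathbf{M}(F_{ii})\circ\mathbf{M}(F_{ii})\;=\;\mathbf{M}(F_{ji})\circ\mathbf{M}(F_{ji}),
\]
since both sides equal a power of $G_{mm}$; but the left side is $G_{mm}^{\oplus a^2d}=G_{mm}^{\oplus 4}$ while the right side is $G_{mm}^{\oplus b^2d}=G_{mm}$. (Equivalently: $\mathbf{M}(F_{ii}^{\oplus 2})\cong G_{mm}^{\oplus 2a}$ versus $\mathbf{M}(F_{ji})\cong G_{mm}^{\oplus b}$ already clashes with $a=2$, $b=1$ once you use that both compositions equal $G_{mm}\circ G_{mm}$ up to the same multiplicity.) The point is that the standing hypothesis $S\neq V$ is exactly what breaks the symmetry between $F_{ii}^2$ and $F_{ji}^2$; you noted $k_i=\dim e_iAe_i$ but then discarded it as ``consistent'' instead of pairing it with $F_{ji}^2=F_{ji}$.

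A minor remark: your identity $[F_{ii}]=k_iE_{mm}$ is not literally correct, since $[G_{mm}]$ has its whole $m$-th row potentially nonzero; what Lemma~\ref{1element} gives is $|X_i|=1$, i.e.\ $\mathbf{M}(F_{ii})$ is a multiple of $G_{mm}$. Working with multiplicities of $G_{mm}$ (as above) rather than with matrix units avoids this issue.
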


\begin{proof}
Let $i,j \in \{1,\ldots, n\}$ be such that $i \neq j$ and assume that 
$X_i \cap X_j \neq \varnothing$. This implies, by Lemma \ref{1element}, that 
$X_i = X_j$ is a singleton, call it $X_i = \{s\}$. 
By the above, we have that $Y_i = X_i = X_j = Y_j = \{s\}$. This implies that 
\begin{align*}
\mathbf{M}(F_{ii}) = \mathbf{M}(F_{jj}) =  \mathbf{M}(F_{ij}) = \mathbf{M}(F_{ji}) = G_{ss}.
\end{align*}

We have, for any $1 \leq k,l \leq n$, 
\begin{equation}\label{eqeqeq}
  \dim{e_lAe_k} = \begin{cases}
                   2, \quad &k = l\not\in S,\\
                   1, &\{k,l\}\in E \text{ or } k = l \in S,\\
                   0, &\text{else}.
                  \end{cases}
\end{equation}

On the one hand, we know that $\dim{\epsilon_sB\epsilon_s} \geq 1$ 
and thus $G_{ss}\circ G_{ss} \neq 0$. On the other hand, we have that 
$F_{ij} \circ F_{ij} =F_{ij}^{\oplus \dim(e_jAe_i)}$.
This implies that $\dim(e_jAe_i)\neq 0$ and hence $\{i,j\}\in E$,
because of  \eqref{eqeqeq}. Further, as we assume that $S\neq V$, we also have
$\{i,j\}\not\subset S$. Let us assume that $i\not\in S$.

As $i\not\in S$, \eqref{eqeqeq} yields the following: 
\begin{align*}
G_{ss}^{\oplus 2} & = \mathbf{M}(F_{ii}^{\oplus 2}) = 
\mathbf{M}(F_{ii}\circ F_{ii}) = \mathbf{M}(F_{ii})\mathbf{M}(F_{ii}) \\
&= \mathbf{M}(F_{ji})\mathbf{M}(F_{ji}) = \mathbf{M}(F_{ji}\circ F_{ji}) = \mathbf{M}(F_{ji}) = G_{ss}.
\end{align*}
As $G_{ss}\neq 0$, this equality is impossible. The obtained contradiction proves our claim.
\end{proof}

From the above, we have $n=r$ and, without loss of generality, we may assume $X_i=\{i\}$,
for all $i=1,2,\dots,n$.

\begin{corollary}
\label{equalCartan}
 For $i, j = 1, 2,  \ldots, n$, we have $\dim{e_iAe_j} = \dim{\epsilon_iB\epsilon_j}$.
\end{corollary}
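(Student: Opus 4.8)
The plan is to extract, from the structure established so far, enough information about the matrices $[F_{ij}]$ to pin down $\dim\epsilon_iB\epsilon_j$ exactly. By Lemma~\ref{1element} and Lemma~\ref{disjoint} we now know $X_i=Y_i=\{i\}$ for all $i$, so $\mathbf{M}(F_{ij})$ is isomorphic to a non-zero direct sum of copies of $G_{ij}$, say $\mathbf{M}(F_{ij})\cong G_{ij}^{\oplus m_{ij}}$ with $m_{ij}\geq 1$. First I would compute $m_{ij}$ by exploiting the multiplicativity of $\mathbf{M}$ together with the composition rule $F_{ij}\circ F_{kl}\cong F_{il}^{\oplus\dim(e_kAe_j)}$ and the analogous rule $G_{ij}\circ G_{kl}\cong G_{il}^{\oplus\dim(\epsilon_kB\epsilon_j)}$ in $B$-mod. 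Applying $\mathbf{M}$ to $F_{ij}\circ F_{jk}\cong F_{ik}^{\oplus\dim(e_jAe_j)}$ and reading off multiplicities gives the relation $m_{ij}\,m_{jk}\,\dim(\epsilon_jB\epsilon_j)=m_{ik}\,\dim(e_jAe_j)$; specializing $i=k$, $j$ arbitrary, and then $i=j=k$ should force all the $m_{ij}$ and all $\dim(\epsilon_jB\epsilon_j)$ to take the smallest possible values, namely $m_{ij}=1$ and $\dim(\epsilon_jB\epsilon_j)=\dim(e_jAe_j)$.

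Granting $m_{ij}=1$ for all $i,j$, the identity $\mathbf{M}(F_{ij})\circ\mathbf{M}(F_{jk})\cong\mathbf{M}(F_{ik})^{\oplus\dim(e_jAe_j)}$ becomes $G_{ij}\circ G_{jk}\cong G_{ik}^{\oplus\dim(e_jAe_j)}$, while on the $B$-side the same composite is $G_{ik}^{\oplus\dim(\epsilon_jB\epsilon_j)}$; comparing again reconfirms $\dim(\epsilon_jB\epsilon_j)=\dim(e_jAe_j)$ for every $j$, which is the diagonal case of the corollary. For the off-diagonal entries I would instead look at a composite that "passes through" the relevant bimodule: from $F_{ii}\circ F_{ij}\cong F_{ij}^{\oplus\dim(e_iAe_i)}$ and the structure of the projective-injective $P_i$ (whose radical-mod-socle layer records exactly which $L_j$ with $\{i,j\}\in E$ occur), one reads that $\mathbf{M}(F_{ii})$ has a specified composition series; but the cleanest route is to use the adjunction/biadjunction data. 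Since $F_{ij}$ and $F_{ji}$ are biadjoint up to the appropriate twists, so are $G_{ij}$ and $G_{ji}$ under $\mathbf{M}$, and $\dim\operatorname{Hom}_{B\text{-}B}(B\epsilon_i\otimes\epsilon_jB,B\epsilon_k\otimes\epsilon_lB)=\dim(\epsilon_kB\epsilon_i)\dim(\epsilon_jB\epsilon_l)$; evaluating $\mathbf{M}$ on a suitable $2$-hom space, e.g.\ $\operatorname{Hom}(F_{ij},F_{ij})$ versus $\operatorname{Hom}(G_{ij},G_{ij})$, yields $\dim(\epsilon_iB\epsilon_i)\dim(\epsilon_jB\epsilon_j)=\dim(e_iAe_i)\dim(e_jAe_j)$ and — crucially — $\operatorname{Hom}(\mathbbm{1}_\texttt{i},F_{ii}F_{jj})$ type computations, whose $\mathbf{M}$-image involves $\dim(\epsilon_iB\epsilon_j)$, pin the remaining entries to $\dim(e_iAe_j)$.

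The main obstacle I anticipate is the bookkeeping around biadjunctions and the $\operatorname{Hom}$-space comparison: applying $\mathbf{M}$ gives an \emph{injection} on $2$-morphism spaces (faithfulness), so a priori one only gets inequalities $\dim(\epsilon_iB\epsilon_j)\geq$ or $\leq\dim(e_iAe_j)$ in one direction, and one must run the argument for the adjoint $1$-morphism as well to get the reverse inequality and hence equality. Care is needed because $\mathbf{M}$ is only assumed faithful, not full, so every dimension count must be set up as a two-sided squeeze, using both $F_{ij}$ and $F_{ji}$ and both orders of composition; the self-injectivity failure for $i\in S$ means the twists in the (co)adjunctions are not all trivial, and tracking them correctly near the leaves in $S$ is where the calculation is most delicate. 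Once all the squeezes close, summing $\dim(e_iAe_j)=\dim(\epsilon_iB\epsilon_j)$ over $i,j$ also recovers the fact that the Cartan matrices of $A$ and $B$ coincide, which is the content of the corollary.
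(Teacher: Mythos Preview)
Your first paragraph is on the right track and actually contains everything needed: you already wrote down the general rule $F_{ij}\circ F_{kl}\cong F_{il}^{\oplus\dim(e_kAe_j)}$ and its $B$-side analogue. The oversight is that in the second and third paragraphs you only ever apply it with \emph{matching} inner indices (compositions of the form $F_{ij}\circ F_{jk}$), which indeed only sees the diagonal entries $\dim(e_jAe_j)$. The paper's proof is the one-line observation that once $\mathbf{M}(F_{st})\cong G_{st}$ for all $s,t$, you simply compare
\[
F_{si}\circ F_{jt}\cong F_{st}^{\oplus\dim(e_iAe_j)}
\qquad\text{with}\qquad
G_{si}\circ G_{jt}\cong G_{st}^{\oplus\dim(\epsilon_iB\epsilon_j)},
\]
for arbitrary $i,j$, and read off $\dim(e_iAe_j)=\dim(\epsilon_iB\epsilon_j)$ directly. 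No biadjunctions, no $\operatorname{Hom}$-space squeezes, no worry about fullness versus faithfulness; the off-diagonal case is literally the same computation as the diagonal one, with unequal inner indices.

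Your third paragraph is therefore an unnecessary detour, and the obstacles you anticipate there (only inequalities from faithfulness, delicate twists near $S$) are real obstacles for \emph{that} route, so as written the proposal does not close. The fix is not to push harder on adjunctions but to go back to your own first paragraph and use the general composition rule with $j\neq k$. Everything you need is the identity $m_{st}\dim(e_iAe_j)=m_{si}m_{jt}\dim(\epsilon_iB\epsilon_j)$, which follows from applying $\mathbf{M}$ to $F_{si}\circ F_{jt}$; once all $m_{\bullet\bullet}=1$, this \emph{is} the corollary.
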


\begin{proof}
This follows immediately since every $F_{st}$ acts via $G_{st}$, by comparing
\begin{align*}
F_{si}\circ F_{jt} \cong F_{st}^{\oplus \dim{e_iAe_j}} \quad \text{ with } \quad G_{si}\circ G_{jt} 
\cong G_{st}^{\oplus \dim{\epsilon_iB\epsilon_j}}
\end{align*}
\end{proof}

\subsection{Proof of Theorem \ref{mainresult}}
With the results of Subsection \ref{Setup} at hand, the proof of Theorem \ref{mainresult} 
can now be done using similar 
arguments as used in \cite[Section 5]{MZ2} or \cite[Subsection 4.9]{MaMa}. 
Consider the principal $2$-representation 
$\mathbf{P}_{\texttt{i}} := \mathscr{C}_A(\texttt{i}, \_ )$ of $\mathscr{C}_A$, 
that is the regular action of $\mathscr{C}_A$ on 
$\mathscr{C}_A(\texttt{i}, \texttt{i})$. Set $\mathbf{N} := \text{add}(F_{i1})$, 
where $i = 1, 2, \ldots, k$, be the additive 
closure of all $F_{i1}$. Now, $\mathbf{N}$ is $\mathscr{C}_A$-stable and thus 
gives rise to a $2$-representation of $\mathscr{C}_A$. 
By \cite[Subsection 6.5]{MM2}, we have that there exists a unique 
$\mathscr{C}_A$-stable left ideal $\mathbf{I}$ in $\mathbf{N}$ and 
the corresponding quotient is exactly the cell $2$-representation $\mathbf{C}_{\mathcal{L}_1}$.

Now, mapping $\mathbbm{1}_{\texttt{i}}$ to the simple object corresponding to $Q_1$
in the abelianization of $\mathbf{M}$, induces a  
$2$-natural transformation $\Phi: \mathbf{N} \to \mathbf{M}$.
Due to the results of the previous subsection, we know that $\Phi$ maps indecomposable 
$1$-morphisms in $\mathcal{L}_1$ to indecomposable objects in $\mathbf{M}$ inducing
a bijection on the corresponding isomorphism classes. By uniqueness of the maximal ideal, 
the kernel of $\Phi$ is contained in $\mathbf{I}$. However, by 
Corollary \ref{equalCartan}, the Cartan matrices of $A$ and $B$ are the same. 
This implies that, on the one hand, the kernel of $\Phi$  cannot be smaller than $\mathbf{I}$
and, on the other hand, that $\Phi$ must be full. Therefore $\Phi$ induces an 
equivalence between $\mathbf{N}\slash\mathbf{I}\cong\mathbf{C}_{\mathcal{L}_1}$ and $\mathbf{M}$. 
The claim of the theorem follows.

{\bf Acknowledgments.}  The author wants to thank his supervisor Volodymyr Mazorchuk for many helpful discussions. 

\bibliographystyle{alpha}

\begin{thebibliography}{9999999}
 \bibitem[Du]{D}
  B. Dubsky.
  \emph{Koszulity of some path algebras}.
  Comm. Algebra {\bf 45} (2017), no. 9, 4084-4092.
 \bibitem[Fl]{F69} 
  P. Flor. 
  \emph{On groups of non-negative matrices}. 
  Compositio Mathematica, tome {\bf 21} $n^{\circ}$ 4 (1969), p.376-382.
 \bibitem[HK]{HK}
 R.~Huerfano, M.~Khovanov.
 \emph{A category for the adjoint representation.} 
 J. Algebra {\bf 246} (2001), no. 2, 514-542. 
 \bibitem[KMMZ]{KMMZ}
 T.~Kildetoft, M.~Mackaay, V.~Mazorchuk, J.~Zimmermann.
 Simple transitive 2-rep\-re\-sen\-ta\-ti\-ons of small quotients of Soergel bimodules.
 Preprint  arXiv:1605.01373.
 \bibitem[MaMa]{MaMa}
  M. Mackaay, V. Mazorchuk.
  \emph{Simple transitive $2$-representations for some $2$-sub\-ca\-te\-go\-ri\-es of Soergel bimodules}.
  J. Pure Appl. Algebra $\mathbf{221}$ (2017), no. 3, 565-587.
 \bibitem[Mar]{M} 
  L. Martirosyan.
  \emph{The representation theory of the exceptional Lie superalgebras $F(4)$ and $G(3)$}.
  Journal of Algebra, Volume $419$ (2014), 167-222.
 \bibitem[Maz]{Ma}
  V. Mazorchuk.
  \emph{Classification problems in $2$-representation theory}.
  Preprint arXiv:1703.10093. To appear in S{\~a}o Paulo J. Math. Sci.
 \bibitem[MM1]{MM1}
  V. Mazorchuk, V. Miemietz.
  \emph{Cell $2$-representations of finitary $2$-categories}.
  Composito Math. \textbf{147} (2011), 1519--1545.
 \bibitem[MM2]{MM2}
  V. Mazorchuk, V. Miemitz.
  \emph{Additive versus abelian $2$-representations of fiat $2$-ca\-te\-go\-ri\-es}.
  Moscow Math. J. \textbf{14} (2014), no. $3$, 595--615.
\bibitem[MM3]{MM3}
V. Mazorchuk, V. Miemitz.
\emph{Endomorphisms of cell $2$-representations}.
Int. Math. Res. Notes, Vol. {\bf 2016}, no. 24, 7471-7498.
%  \bibitem[MM4]{MM4}
%   V. Mazorchuk, V. Miemitz.
%   \emph{Morita theory for finitary $2$-categories}.
%   Preprint arXiv:1304.4698. To appear in Quantum Topology.
 \bibitem[MM5]{MM5}
  V. Mazorchuk, V. Miemitz.
  \emph{Transitive $2$-representations of finitary $2$-categories}.
  Trans. Amer. Math. Soc. {\bf 368} (2016), no. 11, 7623-7644. 
 \bibitem[MM6]{MM6}
  V. Mazorchuk, V. Miemitz.
  \emph{Isotypic faithful $2$-representations of $\mathcal{J}$-simple fiat $2$-ca\-te\-go\-ri\-es}.
  Math. Z. {\bf 282} (2016), no. 1-2, 411-434.
 \bibitem[MMZ]{MMZ}
  V. Mazorchuk, V. Miemietz, X. Zhang.
  \emph{Characterisation and applications of $\Bbbk$-split bimodules}
  Preprint arXiv: 1701.03025v1.
 \bibitem[MZ1]{MZ1}
  V. Mazorchuk, X. Zhang.
  \emph{Simple transitive $2$-represenations for two non-fiat $2$-ca\-te\-go\-ri\-es of projective functors}.
  Preprint arXiv:1601.00097. To appear in Ukr. Math. J.
 \bibitem[MZ2]{MZ2}
  V. Mazorchuk, X. Zhang.
  \emph{Bimodules over uniformly oriented $A_n$ quivers with radical square zero}.
  Preprint arXiv:1703.08377v1.
%  \bibitem[M]{Me00}
%   C. D. Meyer.
%   \emph{Matrix Analysis and Applied Linear Algebra}.
%   Society for Industrial and Applied Mathematics (SIAM), Philadelphia, PA, 2000.
%  \bibitem[P]{P07}
%   O. Perron.
%   \emph{Zur Theorie der Matrices}.
%   Mathematischen Annalen \textbf{64} (2) (1907), 248--263.
\bibitem[Ri]{Ri}
C.~Ringel.  
\emph{The preprojective algebra of a quiver.} 
in: Algebras and modules, II (Geiranger, 1996), 467-480, CMS Conf. Proc., 
{\bf 24}, Amer. Math. Soc., Providence, RI, 1998. 
 \bibitem[St]{S}
  C. Stroppel.
  \emph{Categorification of the Temperley-Lieb category, tangles, and cobordisms via projective functors}.
  Duke Math. J.  {\bf 126} (2005), no. 3, 547-496. 
\end{thebibliography}

\end{document}